\documentclass{amsart}
\usepackage{graphicx}
\usepackage{amsfonts}
\usepackage{float}
\usepackage{algorithm}
\usepackage{algorithmic}

\newtheorem{defi}{Definition}

\newtheorem{conj}{Conjecture}

\newtheorem{lm}{Lemma}
\newtheorem{ex}{Example}

\newtheorem{prop}{Proposition}
\newtheorem{nota}{Notation}

\newtheorem{prob}{Problem}

\usepackage{xcolor}
\begin{document}

\title{Numerical approach for solving problems arising from polynomial analysis}

\author{Yousra Gati, Vladimir Petrov Kostov and Mohamed Chaouki Tarchi}

\address{Universit\'e de Carthage, EPT-LIM, Tunisie}
\email{yousra.gati@gmail.com}
\address{Universit\'e C\^ote d’Azur, CNRS, LJAD, France}
\email{vladimir.kostov@unice.fr}
\address{Universit\'e de Carthage, EPT-LIM, Tunisie}
\email{mohamedchaouki.tarchi@gmail.com}

\begin{abstract}

This paper deals with the use of numerical methods based on random root sampling techniques to solve some theoretical problems arising in the analysis of polynomials. These methods are proved to be practical and give solutions  where traditional methods might fall short. \\

{\bf Key words:} random polynomial; random roots; simulation of polynomial in one variable; sign
 pattern; Descartes' rule of signs\\

\end{abstract}
\maketitle
\section{Introduction}

In this work, we propose a numerical approach to solve problems arising from the analysis of polynomials. Specifically, we address three distinct problems. The first two are related to Descartes' rule of signs, while the third concerns the distance between the  critical points and midpoints of
zeros of hyperbolic polynomials. The common fundamental question of these problems is whether or not polynomials that satisfy certain criteria (such as the sign of their coefficients and the  Descartes' rule of signs in the first example) exist. In the following sections, we present each problem as well as  some known theoretical results and then propose a numerical algorithm to solve some unknown cases.  The numerical approach  is based on the use of random sampling  roots to construct random  polynomials satisfying the conditions imposed by the problem. The random polynomial  thus constructed is then tested to conclude if it is a good example. When this is the case, the realizability problem is resolved. If  not, we cannot conclude, but only deduce that the case  has a strong chance to be non-realizable.  It turns out that our approach is efficient in easily finding examples in the case of realizability and in identifying a priori non-realizable cases. Therefore, this is an  invaluable support  for theoretical studies. Especially since the method is easy to implement and computationally fast. All programs are developed in Python and run on an Intel(R) Core(TM) i5-6200U PC  CPU at 2.30 GHz, and 16 GB of RAM.


\section{Problem 1: Existence of polynomials respecting the Descartes' rule of signs and a given sign pattern.}
The  famous Descartes'  rule of signs (see\cite{Des}) states that the number of positive roots of a univariate polynomial with real coefficients does not exceed the number of sign changes in its sequence of coefficients. In this context, we focus only on polynomials whose coefficients are all non-zero.  
      
\begin{defi}
{\rm We define a {\it sign pattern} as an arbitrary ordered sequence of signs $\sigma_0=(+, \pm, \ldots, \pm)$ beginning with a $+$.
For each sign pattern $\sigma_0$, we denote its {\it Descartes' pair} $(c, p)$ as the pair of positive integers that respectively count the sign changes and the sign preservations in $\sigma_0$.}
\end{defi}

  The Descartes' pair provides an upper bound for the number of positive and negative roots of any polynomial of degree $d$ the signs of whose coefficients define the sign pattern $\sigma_0$. It is important to note that for any sign pattern $\sigma_0$, the relation $p + c = d$ always holds true. To a polynomial $Q(x)=x^d+\sum_{j=0}^{d-1}a_jx^j$ of degree $d$ corresponding to the sign pattern $\sigma=(+, {\rm sgn}(a_{d-1}), \dots, {\rm sgn}(a_{0}))$, we associate the pair $(pos, neg)$, which represents the number of its positive and negative roots counted with  multiplicity. In 1890, Fourier further elaborated  this rule in \cite{Fo}  by stating that the discrepancy between the count of positive roots $pos$ and the number of sign changes $c$ in the coefficients is a multiple of 2, which means that,
\begin{equation}\label{eqdefi1} 
\begin{array}{llll}
 pos\leq c~,& c-pos\in 2\mathbb{Z},&neg\leq p~,& p-neg\in 2\mathbb{Z}.\\ 
\end{array} 
\end{equation}

\begin{defi}
{\rm For a given sign pattern $\sigma$ with Descartes' pair $(c, p)$ we call $(pos, neg)$ a {\it compatible pair} for $\sigma$ if conditions (\ref{eqdefi1}) are satisfied.}
\end{defi}
 One might ask whether, given a sign pattern $\sigma$  and a compatible pair $(pos, neg)$, it is possible to find a real monic polynomial of degree $d$, the signs of whose coefficients  define the sign pattern $\sigma$ and which has exactly $pos$  positive  and exactly $neg$  negative roots. In this case, we say that the couple $(\sigma, (pos, neg))$ is {\it  realizable}. It turns out that for $d = 1$, $2$, and $3$, the answer is positive, but for $d = 4$, the answer is negative; this result is due to Grabiner, see \cite{Gr}. He showed that for degree $4$, the following  are the only couples $(\sigma, (pos, neg))$ that are {\it not realizable}, meaning that no polynomial of degree $4$ can have
sign pattern and pair $(pos, neg)$ equal to 

\begin{eqnarray}
\label{grabiner}
 ((+~-~-~-+), (0, 2)) \quad \text{or} \quad ((+~+~-~+~+), (2, 0)).
\end{eqnarray}

It is clear that the second case can be obtained from the first one by  the change of  the variable $x$ to $-x$. In order  to consider simultaneously such equivalent cases, all researchers working on this issue have adopted the following group action:

\begin{defi}
{ \rm One defines the natural $\mathbb{Z}_2 \times \mathbb{Z}_2$-action on the space of monic polynomials
(and as a consequence on the space of couples $(\sigma, (pos,neg))$ as well) as follows:}
\begin{enumerate}
{\rm 
\item The first generator $g_1$ acts by changing the signs of all monomials in second,
fourth etc. position which for polynomials means $Q(x)\mapsto (-1)^{d}Q(-x)$ ; the admissible pair $(pos, neg)$ becomes $(neg, pos)$. We multiply by $(-1)^d$ to obtain a monic polynomial.
\item  The second generator $g_2$ acts by reading the sign pattern from the right which for polynomials
means $Q(x)\mapsto Q^R(x)/Q(0)$, where $Q^R(x):=x^dQ(1/x)$. One divides by $Q(0)$ in order to obtain again a
monic polynomial; the pair $(pos, neg)$ remains $(pos, neg)$. The generators are two commuting involutions.}
\end{enumerate}
\end{defi}
\begin{nota}
  {\rm We denote by $\Sigma _{m_1,m_2,\ldots ,m_s}$, $m_k\in \mathbb{N}$,
    $m_1+\cdots +m_s=d+1$, the sign pattern beginning with a sequence of $m_1$
    signs $+$ followed by a sequence of $m_2$ signs $-$
    followed by a sequence
    of $m_3$ signs $+$ etc. Example:}
    $$(+,+,-,+,+,+,-,+,+,+)=\Sigma _{2,1,3,1,3}~.$$
  \end{nota}
The research of Grabiner in degree 4 has stimulated the interest of several mathematicians, who have published various articles addressing the following question: 
\begin{prob}
For a given degree $d$, what are the couples $(\sigma, (pos,neg))$ that are non-realizable? 
\end{prob}
The exhaustive answer to this question was provided by Albouy and Fu for $d=5$ and $6$ in \cite{AlFu}, as well as  J.~Forsg\aa rd and  al. for $d=7$ in \cite{FoKoSh}, and by J.~Forsg\aa rd and al. and Kostov for $d=8$, see \cite{FoKoSh,KoCzMJ}. Other non-realizable couples where the sign pattern with $c=2$ have also been addressed by several  mathematicians  in recent years, as shown in \cite{CGK1}. 
 For $c=3$, several articles focus on these cases, particularly those concerning the question of non-realizability for compatible pairs with {\rm min}($pos$, $ neg$)=1, as indicated in \cite{KoMB}, \cite{CGK},
 and \cite{GKTcrm}. The common point among these papers is that the most powerful analytic means to prove realizability as the degree increases is the {\textit{ concatenation lemma}} published in \cite{FoKoSh}. However, this lemma proves insufficient for several cases starting from degree $d=5$. This is why we have developed a numerical method that allows  to find polynomials realizing the pairs in question, even when concatenation does not prove conclusive.
\begin{lm}{(Concatenation Lemma)}\label{lmconcat}
Suppose that the
monic polynomials $P_1$ and $P_2$ of degrees $d_1$ and $d_2$, with
sign patterns represented in the form 
$(+,\sigma _1)$ and $(+,\sigma _2)$ respectively, realize
the pairs $(pos_1, neg_1)$ and $(pos_2, neg_2)$. Here $\sigma _j$
denotes what remains of the sign patterns when the initial sign $+$ is deleted.
Then

(1) if the last position of $\sigma _1$ is $+$, then for any $\varepsilon >0$
small enough, the polynomial $\varepsilon ^{d_2}P_1(x)P_2(x/\varepsilon )$
realizes the sign pattern $(+,\sigma _1,\sigma _2)$ and the compatible pair
$(pos_1+pos_2, neg_1+neg_2)$;

(2) if the last position of $\sigma _1$ is $-$, then for any $\varepsilon >0$
small enough, the polynomial $\varepsilon ^{d_2}P_1(x)P_2(x/\varepsilon )$
realizes the sign pattern $(+,\sigma _1,-\sigma _2)$ and the pair
$(pos_1+pos_2, neg_1+neg_2)$. Here $-\sigma _2$ is obtained from $\sigma _2$
by changing each $+$ by $-$ and vice versa.
\end{lm}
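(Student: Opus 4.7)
The plan is to compute the coefficients of the rescaled product $Q(x) := \varepsilon^{d_2}P_1(x)P_2(x/\varepsilon)$ and read off their dominant terms as $\varepsilon \to 0^+$; the real-root count is automatic, and the sign pattern falls out of one indexing argument split at degree $d_2$.

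First I would normalize the second factor. Writing $P_2(x) = x^{d_2} + \sum_{j=0}^{d_2-1} a_j x^j$, one has
\[
\varepsilon^{d_2}P_2(x/\varepsilon) \;=\; x^{d_2} + \sum_{j=0}^{d_2-1}\varepsilon^{d_2-j}\,a_j\,x^j,
\]
which is monic with exactly the sign pattern $(+,\sigma_2)$, since $\varepsilon>0$ multiplies every coefficient by a positive number. Its real roots are $\varepsilon$ times those of $P_2$, so the numbers of positive and negative real roots are preserved. Consequently $Q$ has $pos_1+pos_2$ positive and $neg_1+neg_2$ negative real roots, counted with multiplicity, settling the root-count part of the claim in both cases.

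Next I would expand $[x^k]Q(x)$ and extract its dominant $\varepsilon$-behavior. Writing $P_1(x) = \sum_{i=0}^{d_1} b_i x^i$ with $b_{d_1}=1$ and splitting according to whether $k \geq d_2$, a direct multiplication yields
\[
[x^k]Q(x) \;=\; \begin{cases} b_{k-d_2} + O(\varepsilon), & k \geq d_2,\\[2pt] \varepsilon^{d_2-k}\bigl(b_0\,a_k + O(\varepsilon)\bigr), & k < d_2, \end{cases}
\]
where the $O(\varepsilon)$ terms collect strictly higher powers of $\varepsilon$. Since there are only finitely many coefficients, a single threshold $\varepsilon_0>0$ makes every displayed leading term strictly dominate its correction for all $0<\varepsilon<\varepsilon_0$.

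Finally I would read the sign pattern of $Q$ from highest to lowest degree. The block $k = d_1+d_2,\ldots,d_2$ contributes $\operatorname{sgn}(b_{d_1}),\ldots,\operatorname{sgn}(b_0)$, reproducing $(+,\sigma_1)$; the block $k = d_2-1,\ldots,0$ contributes $\operatorname{sgn}(b_0)\cdot\operatorname{sgn}(a_{d_2-1}),\ldots,\operatorname{sgn}(b_0)\cdot\operatorname{sgn}(a_0)$, which equals $\sigma_2$ when $\operatorname{sgn}(b_0)=+$ (case (1)) and $-\sigma_2$ when $\operatorname{sgn}(b_0)=-$ (case (2)). Since $b_0$ is precisely the last coefficient of $P_1$, i.e.\ the last entry of $(+,\sigma_1)$, this yields the two claimed patterns. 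The only genuine obstacle is bookkeeping at the junction $k=d_2$: one must observe that the minimal $\varepsilon$-power of $[x^k]Q$ jumps from $\varepsilon^0$ to $\varepsilon^1$ precisely at $k=d_2-1$, and that the two sides are patched by the multiplicative sign $\operatorname{sgn}(b_0)$.
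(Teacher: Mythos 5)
Your proof is correct. The paper itself does not prove the lemma (it is quoted from Forsg\aa rd--Kostov--Shapiro), but your argument --- normalizing $\varepsilon^{d_2}P_2(x/\varepsilon)$ to see that it is monic with the same sign pattern and root counts as $P_2$, then reading off the dominant $\varepsilon$-power of each coefficient of the product on either side of the junction $k=d_2$, with $\operatorname{sgn}(b_0)$ supplying the twist between cases (1) and (2) --- is exactly the standard proof of this concatenation lemma, and all the steps (in particular the use of the hypothesis that all coefficients are non-zero, so every leading term survives for small $\varepsilon$) are in order.
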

\subsection{Numerical Method and Algorithm}\label{numericalmethod}

Our numerical method is based on generating independent and identically uniformly distributed random roots. For a given degree $d$  and a specified couple (sign pattern, $(pos, neg)$), the program generates $pos + neg$ real numbers to create roots that satisfy the pair ($pos, neg$), and $d - pos - neg$  real numbers to form  $(d - pos-neg)$   conjugate complex pairs of roots. The code then calculates the coefficients of the polynomial and checks if it matches the sign pattern. If it does, the program stops and returns the result, i.e., the polynomial and its decomposition. If it does not match, the program continues and repeats the simulation until it finds a valid polynomial or until a maximum number of simulations  $N $ is reached. The code utilizes two arbitrary parameters: a real number  $\ell$ that defines the interval within which we will generate uniformly distributed random numbers and the integer $N$, which specifies the maximum number of simulations, typically set to a very large value.

\begin{algorithm}[H]
\caption{Realizable couples (sign pattern, (pos, neg)) for a given degree \( d \)}
\label{algo}
\begin{algorithmic}[1]
    \STATE Input  a degree \( d \), a sign pattern and a pair \( (pos, neg) \).
    \STATE Generate random uniform \( pos \) positive  and \( neg \) negative numbers (roots) on an interval $[-\ell,\ell]$.
    \STATE Generate random uniform \( d - pos - neg \) numbers to construct pairs of conjugate complex roots.
    \STATE Calculate the coefficients of  the polynomial of degree \( d \) constructed using roots computed in the previous steps.
    \STATE Test whether the coefficients correspond to the  sign pattern and stop if it is the case.
    \STATE Repeat this process \( N \) times, where \( N \) is a large number.
\end{algorithmic}
\end{algorithm}

\subsection{Numerical tests}

The program has been tested first for some known cases. We verified that it actually gives examples of polynomials when the realizability is already established. This constitutes a first validation of the method. We list here polynomials $Q_1$, $ Q_2$, $Q_3$, $Q_4$ and $Q_5$ obtained by our method respectively for  the cases $(\Sigma_{1,3,2},(0,3))$, $(\Sigma_{1,5,2},(0,3))$, $(\Sigma_{2,1,1,1,2,1},(3,0))$, $(\Sigma_{2,1,4,1},(3,0))$, and $(\Sigma_{2,1,3,2},(3,0))$, whose realizability is established in \cite{AlFu, KoSha} and for which  no methodology or technique has been clearly presented. The results are rounded for clarity.

\begin{eqnarray*}
Q_1&:=&(x+0.723)(x+0.59)(x+0.48)(x^2-1.97x+0.977)\\
& =& x^5-0.177x^4-1.498x^3-0.125x^2+0.629x+0.2,\\
& & \\
Q_2&:=&(x+0.8)(x+0.77)(x+0.39)(x^2-0.13x+0.65)(x^2-1.88x+0.89) \\
&=& x^7-0.05x^6-0.927x^5-0.069 x^4-0.334x^3-0.08x^2 +0.389x+0.139,\\
& & \\
Q_3&:=&(x+0.389)(x-0.4121)(x-0.579)(x^2+1.4124x +0.499)(x^2+0.032x+0.704),\\
&=& x^7 + 0.065x^6 - 0.121x^5 + 0.096x^4 - 0.398x^3+0.030x^2 + 0.125x - 0.033,\\
& & \\
Q_4&:=&(x-0.5)(x-0.596)(x-0.975)(x^2+1.954x+0.956)(x^2+0.2x+0.359)\\
&=& x^7+0.083x^6-1.389x^5+0.013x^4+0.2x^3+0.014x^2 +0.210x-0.1,\\
& & \\
Q_5&:=&(x-0.597)(x-0.69)(x-0.85)(x^2+1.81x+0.83)(x^2+0.35x+0.15)\\
&=& x^7+0.023x^6-1.497x^5+0.017x^4+0.597x^3+0.0153x^2 -0.009x-0.044.\\
\end{eqnarray*}

The considered interval $[-\ell,\ell]$ for these tests is equal to $[-1,1]$.  The CPU time  is about $3.46$ seconds with $N=10^7$. 
 For higher degree tests, if we don't obtain results, one should change these parameter values  before concluding that the case is potentially non-realizable. We can for example increase the value of $N$ and/or choose a different  interval for the sampling of roots. That is,  some  roots will be chosen from a wide
uniform distribution and the remainder will be chosen from a much narrower distribution. 
Moreover,  one can consider a multiple roots polynomial sampling to improve   the approach. \\

The robustness of the method already proven, the algorithm is tested for some unknown cases  $$ C_1 := (\Sigma_{1,3,2,3,1}, (0, 3)),\; C_2 := (\Sigma_{1,3,1,3,2}, (0, 3)) \;\mbox{and}\; C_3:=(\Sigma_{1,5,1,1,2}, (0, 3)). $$ Our numerical method enables us to quickly generate concrete polynomials that effectively realize these couples and to conclude the realizability of these cases. Here are some examples:
\vspace{5mm}
\begin{eqnarray*}
P_{C_1}&:=&(x+0.2)(x+0.3)(x+0.3)\\
& & \qquad\qquad \qquad (x^2-1.84x+0.846)(x^2-1.62x+0.67)(x^2+1.72x+1.348)\\
&=&x^9-0.73x^8-1.5258x^7-0.191020x^6+2.52140544x^5-0.2081491476x^4\\
& &-1.051144849x^3-0.0043084783x^2+0.1632999587x+0.02862830065,\\
& & \\
P_{C_2}&:=&(x+0.25)(x+0.27)(x+0.43)\\
& & \qquad\qquad \qquad (x^2-0.4x+0.0402)(x^2-1.06x+0.33)(x^2+0.4x+0.14) \\
&=&x^9-0.11x^8-0.3659x^7-0.0082x^6+0.06757x^5+0.025x^4-0.0022x^3\\
& &-0.0022x^2-0.000017x+0.000053.\\
& & \\
P_{C_3}&:=&(x+ 0.786)(x+ 0.696)(x+ 0.622)\\
& & \qquad\qquad \qquad (x^2 + 0.3848x+ 0.808)(x^2 -0.5783x+ 0.706)(x^2 -1.972x + 0.975) \\
&=&x^9-0.0615x^8-0.43929984x^7-0.200085009x^6-0.798790981x^5-0.0587716x^4 \\
& & +0.044796444x^3 - 0.008446381x^2 + 0.369292280x + 0.1892530328.
\end{eqnarray*}

The CPU time with $N=10^7$  and $\ell=1$ is about $2.45$ seconds for $C_1$ and $C_2$ and about 200 seconds for $C_3$. It is important to note that the realizability of $C_3$ cannot be established using the concatenation lemma. Indeed,  the only options for applying the concatenation lemma are :
$$\begin{tabular}{llll}
$C_3$ &$(\Sigma_{1,5,1,1,1},(0,2))$ & {\rm and} & $(\Sigma_{2},(0,1))$ \\
 & $(\Sigma_{1,5,1},(0,2))$ & {\rm and} & $(\Sigma_{1,1,2},(0,1))$ 
\end{tabular}$$
However, this proves impossible, as in each configuration,  the first considered couple is non-realizable (see \cite[Theorem 10]{FoKoSh} and \cite{AlFu}). 
\section{Problem 2: Descartes' rule of signs and moduli of roots}

A real degree $d$ polynomial $ Q := \sum_{j=0}^{d} a_j x^j$, with  $a_d = 1 $, is said to be {\it hyperbolic } if all its roots are real. We assume that all coefficients $a_j$  are non-zero. In this context, Descartes' rule of signs tells us that this polynomial has $ c $ positive roots and $ p$  negative roots (counted with  multiplicity), with  $ c + p = d$. Here, $c$  represents the number of sign changes and $p$ the number of sign preservations in the sequence of coefficients of $Q$.

To explore the issue of realizability, we are interested in couples consisting of a sign pattern and an   {\it order of moduli}. The order of  moduli  is defined by the relative positions of the moduli of the positive and negative roots on the positive half-axis. A couple (sign pattern, order of moduli) is  {\it compatible }  if the order of moduli has exactly $ c$  moduli of positive  and $ p$ moduli of negative roots, all distinct.

We qualify such a couple as {\it realizable} if there exists a hyperbolic polynomial  the signs of whose coefficients and the moduli of whose roots define the sign pattern and the order of moduli of the couple. Thus, the question that arises can be formulated as follows:
\begin{prob}\label{prob2}
For a given degree $ d$ , what are the realizable compatible couples (sign pattern, order of  moduli)?
\end{prob}
The answer to this question is developed in the work of Kostov in \cite{KoCMA23} for degrees $d \leq 5$, as well as for $d = 6$ with two sign changes in \cite{KoSoz19}. A comprehensive answer to this problem for $d = 6$ is provided by Gati et al. in \cite{GKTMC}. For more information on the subject, one can refer to the works of Kostov in \cite{KoPuMaDe} and those of Gati et al. in \cite{GKTGJM}. It is noteworthy that, in these papers, the most powerful method for demonstrating the realizability of such a couple for each degree increase is the concatenation of couples presented in \cite[subsection 2.5]{GKTMC}. This involves studying a couple (sign pattern,  order of moduli) by concatenating two sequences of signs as explained below. However, this concatenation method as well as all known analytic methods  proves insufficient. In the same way of Problem 1, we develop a second algorithm, which is a slight modification of the previous one,  and conclude for the realizability of some unknown cases.\\

To better understand Problem \ref{prob2} and grasp the importance of the numerical method in its resolution, especially when analytic methods are not effective, it is very useful to introduce the following definition, along with the corresponding notations and examples that follow. For more comprehensive introduction of the problem, one can see \cite{Ko}, \cite{KoSe}, \cite{KorigMO} and \cite{theseT}.
\begin{defi}
{\rm The {\em order of moduli} is defined by the roots of a given hyperbolic
  polynomial $Q$  as follows (the general definition should be
  clear from this example.) Suppose that $d=7$ and that there are
  four negative roots $-\gamma_4<-\gamma _3<-\gamma _2<-\gamma _1$
  and three positive roots $\alpha _1<\alpha _2<\alpha _3$
  (so $c=2$ and $p=5$), where
  $$\alpha _1<\gamma _1<\gamma _2<\gamma _3<\alpha _2<\gamma _4<\gamma_5,$$
  then we say that the roots define the order of moduli $PNNNPNN$, i.~e. 
  the letters $P$ and $N$ denote the relative positions of the moduli of
  positive and negative roots.}
  \end{defi}
\begin{nota}
{\rm
 Consider the case where $d = 7$ and there are $4$ negative roots, denoted as $  -\gamma_4 < -\gamma_3 < -\gamma_2 < -\gamma_1$, along with three positive roots, denoted as $\alpha_1 < \alpha_2<\alpha_3 $ (which implies that $c = 3 $ and $p = 4$). We assume that the moduli of the roots satisfy the following inequalities:
  $$ \alpha_1 < \gamma_1 < \alpha_2<\gamma_2 < \gamma_3 < \alpha_3 < \gamma_4 .  $$
\noindent
We denote by $u_1$, $u_2$, $u_3$ and $u_4$ the number of moduli of the negative roots located in the respective intervals $ (0, \alpha_1)$, $(\alpha_1, \alpha_2)$, $(\alpha_2, \alpha_3)$ and $(\alpha_3, +\infty)$. In this case, we indicate that the roots define the order of  moduli as $[0,1, 2, 1] $, which means that $u_1 = 0 $, $ u_2 = 1 $, $u_3=2$ and $ u_4 = 1 $.
  }
\end{nota}
In the following paragraph, we will present the concatenation method, inspired by the concatenation lemma and first introduced in \cite{GKTMC}. We will also provide an application example, as well as another example where this method proves insufficient to realize a given couple (sign pattern, order of moduli).

\subsection{Concatenation of couples}\label{subsecconcat}
Consider a hyperbolic degree $d$ polynomial $V$ with distinct moduli of roots
and non-vanishing coefficients. Denote by $\Omega$ the order of the moduli of
its roots, where $\Omega$ is a string of letters $P$ and/or $N$. Then for
$\varepsilon >0$ small enough, the first $d+1$ coefficients of the
degree $d+1$ hyperbolic polynomials
$$W_-:=V(x)(x-\varepsilon )\;\mbox{and }\;W_+:=V(x)(x+\varepsilon )$$ are perturbations of
the respective coefficients of $V$. Hence they are of the same signs. The three polynomials realize the couples

$$V:=(\sigma (V),\Omega )~,~~~\, W_-:(\sigma (W_-),P\Omega )~~~\,
{\rm and}~~~\, W_+:(\sigma (W_+),N\Omega )~,$$
where $P\Omega$ and $N\Omega$ are the respective concatenations of strings.
Denote by $\alpha$ the last component of the sign pattern $\sigma (V)$, where
$\alpha =+$ or~$-$. Hence $\sigma (W_-)$ (resp. $\sigma (W_+)$)
is obtained from $\sigma (V)$ by adding to the right the component $-\alpha$
(resp. $\alpha$). We say that the couples $W_-$ and $W_+$ are obtained by
{\em concatenation} of the couple $V$ with the couples $((+,-),P)$ and
$((+,+),N)$ respectively.
\begin{ex}
{\rm

1)  The couple $(\Sigma_{3,1,2,1,1},~PPPPNNN)$ is realizable. Indeed, it is shown in \cite[1 of Theorem 1]{GKTMC} that for $d=6$, the couple  $(\Sigma_{3,1,2,1},PPPNNN)$ is realizable. Let then  $Q_6$ be a degree $6$ polynomial realizing the latter couple. Hence, for $\varepsilon > 0$ small enough, the product $Q_6(x)(x - \varepsilon)$ realizes the order $PPPPNNN$ with the sign pattern $\Sigma_{3,1,2,1,1}$.
\vspace{3mm}

2)  The couple $(\Sigma_{3,1,2,2},~ NPPPNNN)$ is realizable. Indeed, as mentioned above,  the couple  $(\Sigma_{3,1,2,1},PPPNNN)$  is realizable. Denote by $Q_6$ a degree $6$ polynomial realizing the latter couple. Hence, for $\varepsilon > 0$ small enough, the product $Q_6(x)(x + \varepsilon)$ realizes the order $NPPPNNN$ with the sign pattern $\Sigma_{3,1,2,2}$.
}
\end{ex}

\subsection{Algorithm and Numerical examples}
We apply the same idea as the one used in Algorithm~1 and modify it to match the problem as follows.
\begin{algorithm}[h]
\caption{Realizable couples (sign pattern, order of moduli) for a given degree~$d$}
\label{algo_ordermoduli}
\begin{algorithmic}[1]
    \STATE Input a degree~$d$, a sign pattern and an  order of moduli.
    \STATE Randomly generate $d$  independent, identically and  uniformly distributed positive numbers. 
		\STATE Sort the numbers then multiply by $-1$ when needed to construct the given order of moduli. 
		\STATE Calculate the coefficients of the hyperbolic polynomial using the ordered real roots from the previous step.
		\STATE Verify if the coefficients of the obtained polynomial correspond to the given sign pattern.
		\STATE Repeat until obtaining the good polynomial or until reaching an arbitrary parameter $N$.
\end{algorithmic}
\end{algorithm}

A first numerical example is the couple $(\Sigma_{3,4,1}, [0,0,5])$ whose  realizability cannot be obtained by any known analytic methods. A numerical example given by our algorithm implies  that the realizability of this couple is given by the polynomial 


$$\begin{array}{l}
(x-0.77)(x-4.28)(x+4.31)(x+4.47)(x+4.59)(x+4.68)(x+4.91)\\

=x^7+17.91x^6+98.1106x^5-21.793074x^4-1971.427200x^3-5976.303538x^2\\
-2955.965399x+6696.676474.\\
\end{array}$$
This allows  to conclude that the sign pattern $\Sigma_{3,4,1}$ is realizable only by the orders of moduli $[0,5,0]$, $[0,4,1]$, $[0,3,2]$, $[0,2,3]$, $[0,1,4]$, and $[0,0,5]$. Indeed, according to  \cite[Theorem 3]{KoPuMaDe}, an order of  moduli  is realizable only if $\alpha_1 < \gamma_1$, where $\alpha_1 < \alpha_2$ and $-\gamma_i$, $i=1,\dots,5$,  $\gamma_i < \gamma_{i+1}$ are respectively the positive and negative roots of a polynomial of degree $7$ with sign pattern $\Sigma_{3,4,1}$. Then, by applying part (2) of \cite[ Theorem 4]{KoPuMaDe}, we can prove that the moduli orders $[0,5,0]$, $[0,4,1]$, $[0,3,2]$, $[0,2,3]$, and $[0,1,4]$ are realizable.\\

Other numerical examples  concern the sign pattern  $\Sigma_{1,2,3,2}$ with  the following orders of moduli :
$$[0,3,0,1], [1,2,0,1], [2, 0, 1, 1], [2, 0, 0, 2], [2, 1, 0, 1]\;\mbox{ and }\; [3, 0, 0, 1] $$ whose realizability cannot be proved by any analytic method. We display here the numerical results:

$$\begin{array}{ll}
[0,3,0,1]:&(x-0.628)(x+0.688)(x+0.722)(x+0.950)(x-2.83)(x-4.26)(x+4.33)\\
&=x^7-1.028x^6-23.070064x^5+18.25165163x^4+85.39426303x^3\\&+32.00673579x^2
-30.03754531x-15.47008913\\\\

[1,2,0,1]:&(x+0.14)(x-0.15)(x+0.20)(x+0.32)(x-0.77)(x-2.05)(x+2.13)\\
&=x^7-0.18x^6-4.7422x^5+1.066232x^4+1.55397477x^3+0.1792075450x^2\\
&-0.03291572340x-0.004518803520\\\\

[2,0,1,1]:&(x+0.88)(x+1.19)(x-2.64)(x-2.67)(x+2.8)(x-3.69)(x+3.92)\\
&=x^7-0.21x^6-26.5337x^5+4.534365x^4+205.9891230x^3\\
&+14.83884381x^2
-467.7618374x-298.9615155\\\\
\end{array}$$
$$\begin{array}{ll}
[2,0,0,2]:&(x+1.01)(x+1.65)(x-3.3)(x-3.9)(x-4.23)(x+4.24)(x+4.47)\\
&x^7-0.06x^6-42.8452x^5+2.610486x^4+567.6094115x^3+68.3101894x^2\\
&-2166.332517x-1719.481913\\\\

[2,1,0,1]:& (x+1.13)(x+1.7)(x-3.28)(x+3.46)(x-3.559)(x-4.445)(x+4.64)\\
&=x^7-0.354x^6-40.362845x^5+7.46423375x^4+496.1523459x^3\\
&+96.0221457x^2-1867.359344x-1600.276550\\

[3,0,0,1]:&(x+1.19)(x+1.3)(x+1.4)(x-2)(x-3)(x-3.5)(x+3.93)\\
&=x^7-0.68x^6-22.6493x^5+11.98954x^4+135.291379x^3\\
&+16.6357660x^2-260.8328310x-178.7434740.\\\\
\end{array}$$

The arbitrary parameters $N$ and  $\ell$ are chosen equal to $10^3$ and $5$, respectively. The CPU time is about $1.12$ seconds.\\

These numerical results, combined with analytic methods allow  to deduce the following proposition:

\begin{prop}
The sign patterns $\Sigma_{1,2,3,2}$ is realizable  by exactly  $21$  out of $35$   possible  orders of moduli. 
All other orders of moduli are not realizable.
\end{prop}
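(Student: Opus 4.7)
The plan is to decide realizability for each of the $\binom{7}{3}=35$ compatible orders of moduli, encoded as quadruples $[u_1,u_2,u_3,u_4]$ with $u_1+u_2+u_3+u_4=4$, where $u_i$ counts the moduli of negative roots in the $i$-th interval between the three positive roots (or before the first, respectively after the last). The sign pattern $\Sigma_{1,2,3,2}$ has $c=3$ sign changes and $p=4$ sign preservations, so any realizing hyperbolic polynomial has degree~$7$ with three positive and four negative roots.

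For the affirmative half I would combine three ingredients. The numerical algorithm of Algorithm~\ref{algo_ordermoduli} already supplies explicit polynomials for the six delicate orders $[0,3,0,1]$, $[1,2,0,1]$, $[2,0,1,1]$, $[2,0,0,2]$, $[2,1,0,1]$ and $[3,0,0,1]$ displayed in the text. A substantial portion of the remaining realizable orders should be reachable by the concatenation method of subsection~\ref{subsecconcat}, starting from the degree~$6$ classifications of the sign patterns $\Sigma_{1,2,3,1}$ and $\Sigma_{2,3,2}$ in \cite{GKTMC}: each realizable degree~$6$ couple produces two degree~$7$ couples by multiplication by $(x-\varepsilon)$ or $(x+\varepsilon)$, which prepend respectively a $P$ or an $N$ to the order of moduli (equivalently, increment $u_1$). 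Any orders still untreated after this sweep I would attack by an additional run of Algorithm~\ref{algo_ordermoduli} with enlarged $N$ or a non-uniform sampling window, as suggested in subsection~\ref{numericalmethod}.

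For the non-realizability of the $14$ remaining orders the tools are purely analytic. Vieta's formulas translate each forced coefficient sign into a symmetric inequality on the roots; for instance, the sign $a_6<0$ dictated by $\Sigma_{1,2,3,2}$ forces $\alpha_1+\alpha_2+\alpha_3>\gamma_1+\gamma_2+\gamma_3+\gamma_4$, which immediately refutes orders that push every $\gamma_j$ above every $\alpha_i$, notably $[0,0,0,4]$. Further obstructions come from iterating the same idea on the derivatives $Q',Q'',\ldots$, which are hyperbolic with interlaced roots by Rolle's theorem and therefore inherit sign-pattern constraints that propagate back to $Q$, and from direct application of \cite[Theorems~3 and~4]{KoPuMaDe}; this mirrors the strategy used for the analogous classifications in \cite{KoSoz19,GKTMC}.

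The main obstacle is clearly this last step: finding the right Vieta or Rolle argument for each of the $14$ non-realizable orders is case-specific and requires some ingenuity, since no single inequality closes the whole list at once. Once all obstructions are in place, one checks that the $21$ realizable and $14$ non-realizable orders partition the full set of $35$ compatible orders, which yields the claim.
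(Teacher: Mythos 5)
Your overall architecture matches the paper's: $35$ compatible orders, $6$ delicate ones settled by Algorithm~\ref{algo_ordermoduli}, the bulk of the remaining realizable ones by concatenation from the degree-$6$ classification, and the rest excluded by analytic obstructions. On the affirmative side the paper is slightly more economical than your sketch: it uses a single concatenation move, multiplying a degree-$6$ polynomial realizing $(\Sigma_{2,3,2},\Omega)$ (all $15$ such $\Omega$ are realizable by \cite{KoSoz19}) by $(x-\delta)$ with $\delta$ large, which appends a positive root of dominant modulus and at one stroke realizes all $15$ orders having no negative modulus beyond the largest positive one; your proposal to prepend roots starting from $\Sigma_{1,2,3,1}$ and $\Sigma_{2,3,2}$ is a legitimate alternative but is left as a ``sweep'' whose outcome you do not actually determine, and you fall back on further numerics for whatever remains. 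As written, your proof never pins down which $21$ orders are realizable and which $14$ are not.

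The genuine gap is in the non-realizability half, and you flag it yourself as ``the main obstacle.'' You observe that the sign of the coefficient of $x^6$ forces $\alpha_1+\alpha_2+\alpha_3>\gamma_1+\gamma_2+\gamma_3+\gamma_4$, but you apply it only to orders in which \emph{every} $\gamma_j$ exceeds \emph{every} $\alpha_i$, which kills only $[0,0,0,4]$; for the other cases you appeal to unspecified Rolle/derivative arguments and to \cite{KoPuMaDe}. The missing idea is that the same single inequality disposes of $13$ of the $14$ bad orders at once via a \emph{pairing} rather than a total domination: whenever the order of moduli forces $\gamma_2>\alpha_1$, $\gamma_3>\alpha_2$ and $\gamma_4>\alpha_3$ (equivalently $u_4\ge 1$, $u_3+u_4\ge 2$ and $u_1\le 1$), one writes
$$q_6=\gamma_1+(\gamma_2-\alpha_1)+(\gamma_3-\alpha_2)+(\gamma_4-\alpha_3)>0,$$
contradicting the second sign $-$ of $\Sigma_{1,2,3,2}$. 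This covers all $13$ orders listed in (\ref{13ineq}); the last case $[1,1,1,1]$ is excluded in the paper by the rigidity theorem of \cite{KorigMO} (it is realizable only with $\Sigma_{2,2,2,2}$), although the same pairing inequality in fact applies to it as well. Without this uniform argument your case-by-case programme is not carried out, so the proof is incomplete.
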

\begin{proof}
Recall that a degree $7$ hyperbolic polynomial, with three sign changes in the sequence of its coefficients, has $35$  a priori possible orders of moduli $[u_1, u_2, u_3, u_4]$. Among them, there are $15$ cases where $u_4=0$, $10$ cases where $u_4=1$, $6$ cases where $u_4 = 2$, $3$ cases where $u_4=3$ and one case where $u_4$ equals $4$.

Consider a degree $7$ hyperbolic polynomial $Q$. Let $-\gamma_4 < -\gamma_3 < -\gamma_2 < -\gamma_1<0$ be its negative, and let $0 < \alpha_1 < \alpha_2 < \alpha_3$ be its positive roots. We have

 $$Q:= x^7 + \sum_{j=0}^{6} q_j x^j = (x - \alpha_1)(x - \alpha_2)(x - \alpha_3) \prod_{i=1}^{4} (x + \gamma_i)$$

 We give the proof of non-realizability of  $14$ orders  of moduli. 
The couple $(\Sigma _{1,2,3,2},[1,1,1,1])$ is not realizable, because the order of moduli $[1,1,1,1]$ is rigid and hence realizable only with the sign pattern $\Sigma _{2,2,2,2}$, see \cite[Definition 2 and Theorem 1]{KorigMO}.
    
We prove that the remaining $13$ cases are not realizable. We suppose that they are realizable by a polynomial $Q$, one obtains that

  $$q_6:=(\gamma _2-\alpha _1)+(\gamma _3-\alpha _2)+(\gamma _4-\alpha _3)+ \gamma_1>0,$$
  which contradicts the sign
  pattern. These are
\begin{equation}\label{13ineq}
\begin{array}{ll}
[0,0,0,4]:&\alpha _1<\alpha_2<\alpha _3<\gamma _1<\gamma _2<\gamma _3<\gamma_{4},\\

[0,0,1,3]:&\alpha_1<\alpha_2<\gamma _1<\alpha _3<\gamma _2<\gamma _3<\gamma_{4},\\ 

[0,0,2,2]:&\alpha_1<\alpha_2<\gamma_1<\gamma_2<\alpha_3<\gamma _3<\gamma_{4},\\

[0,0,3,1]:&\alpha _1<\alpha_2<\gamma _1<\gamma _2<\gamma _3<\alpha _3<\gamma_{4},\\ 

[0,1,0,3]:&\alpha _1<\gamma _1<\alpha_2<\alpha _3<\gamma _2<\gamma _3<\gamma_{4},\\

[0,1,1,2]:&\alpha _1<\gamma _1<\alpha_2<\gamma _2<\alpha _3<\gamma _3<\gamma_{4},\\ 

[0,1,2,1]:&\alpha _1<\gamma_1<\alpha_2<\gamma_2<\gamma_3<\alpha _3<\gamma _4,\\

[0,2,0,2]:&\alpha _1<\gamma_1<\gamma_2<\alpha_2<\alpha _3<\gamma _3<\gamma_{4},\\

[0,2,1,1]:&\alpha _1<\gamma_1<\gamma_2<\alpha_2<\gamma_3<\alpha _3<\gamma_{4},\\

[1,0,0,3]:&\gamma_1<\alpha _1<\alpha_2<\alpha _3<\gamma _2<\gamma _3<\gamma_{4},\\ 

[1,0,2,1]:&\gamma_1<\alpha _1<\alpha_2<\gamma_2<\gamma_3<\alpha _3<\gamma _4,\\

[1,0,1,2]:&\gamma_1<\alpha _1<\alpha_2<\gamma _2<\alpha _3<\gamma _3<\gamma_{4},\\ 

[1,1,0,2]:&\gamma_1<\alpha _1<\gamma_2<\alpha_2<\alpha _3<\gamma _3<\gamma_{4}.\\ 
 \end{array}
\end{equation}
It is shown in \cite[Subsection~3.5]{KoSoz19} that for $d=6$, the sign pattern $\Sigma_{2,3,2}$ is realizable with all 15 compatible orders $\Omega$ ($\Omega$ is any string of 2 letters $P$ and 4 letters $N$). Denote by $Q_6$  a polynomial realizing the couple $(\Sigma_{2,3,2},\Omega )$. Hence for $\delta >0$
  sufficiently large,   the product $T(x)(x-\delta )$ realizes the order $\Omega P$ with the sign pattern $\Sigma_{1,2,3,2}$, (see \ref{subsecconcat}). 
   This prove the realizability by concatenation of the sign pattern $\Sigma_{1,2,3,2}$ with the following $15$  orders of moduli:
  $$\begin{array}{lllll}
  [0, 0, 0, 4],&[0, 0, 1, 3],&[0, 0, 2, 2],&[0, 0, 3, 1],&[0, 0, 4, 0],\\
  
[0, 1, 0, 3],&[0, 1, 1, 2],&[0, 1, 2, 1],&[0, 1, 3, 0],&[0, 2, 0, 2],\\

[0, 2, 1, 1],&[0, 2, 2, 0],&[0, 3, 0, 1],&[0, 3, 1, 0],&[0, 4, 0, 0].
  \end{array}$$
The remaining six orders of moduli are realizable thanks to the algorithm as explained above. This ends the proof.
\end{proof}

\section{Problem 3: Distances between critical points and midpoints of zeros
of hyperbolic polynomials}

Let $P(x) = a_n x^n + \dots + a_1 x + a_0 $, with $ a_n \neq 0 $, an algebraic polynomial of degree $n $ with real coefficients $a_j $, $j=1\dots n$ and whose zeros $ x_1, \dots, x_n $ are all real.  Assume that $ x_1 \leq x_2 \leq \dots \leq x_n $ and let $ z_k = (x_k + x_{k+1})/2 $, which represents the midpoints of the zeros of $ P(x)$ . Define $\tilde{P}(x) := (x - z_1) \cdots (x - z_{n-1}) $. Let $ \xi_1 \leq \xi_2 \leq \dots \leq \xi_{n-1} $ be the zeros of $ P'(x) $, that is, the critical points of $P(x) $. We denote by $ m(P)$, $m(\tilde{P})$, and $m(P')$  the smallest distances between consecutive terms of the sequences ${x_k}$,  ${z_k}$ , and $ {\xi_k} $, respectively, that is:

$$ m(P) = \min \{ x_{k+1} - x_k: k = 1, \dots, n-1 \},$$
$$ m(\tilde{P}) = \min \{ z_{k+1} - z_k: k = 1, \dots, n - 2 \},$$
$$m(P') = \min \{ \xi_{k+1} - \xi_k: k = 1, \dots, n - 2 \}. $$

Similarly, we will denote by $ M(P)$, $M(\tilde{P}) $, and $ M(P')$ the corresponding maximum distances between the zeros of $P $, $\tilde{P}$ , and $P'$. The same notation will be used for entire functions that have only real zeros, with the convention that, instead of minima and maxima, we will consider infima and suprema whenever they are well-defined.
 We   answer here an open question in Theorem $3$ by D.K. Dimitrov and V.P. Kostov, published in \cite{DKDVPK}. This work analyzes the relationship between  $m(P)$  and  $m(\tilde{P})$ , as well as the one between $M(P) $ and $ M(\tilde{P})$. The analytic methods employed by Kostov and Dimitrov is build on a classical result by Marcel Riesz. 

We recall that the set of real entire functions of order at most two that have only real zeros is referred to as the Laguerre-P\'olya class. Each function belonging to this class will be   referred to as an $\mathcal{LP}$-function. Given that the issue raised by Farmer and Rhoades specifically concerns functions of order one, we will limit our discussion to the subclass of $\mathcal{LP}$-functions of order one.

The Laguerre-P\'olya class thus includes entire functions of order at most two. Motivated by the behavior of the zeros of the Riemann zeta function and its derivatives, Farmer and Rhoades \cite{DWF.RR} extended Riesz' result to entire functions belonging to the subclass $\mathcal{LP}1\subset \mathcal{LP} $. The functions of the class $\mathcal{LP}1$ are the ones which are uniform limits on compact sets of real polynomials with all roots real and of the same sign. They are entire functions of order 0 or 1.   It was proven in \cite{DWF.RR} (see Theorem 2.3.1) that for any function $f \in \mathcal{LP}1$  with zeros $x_k$ arranged in increasing order, if  $\xi < \eta$  are consecutive zeros of  $f + af$ , with $a \in \mathbb{R}$, then the following inequalities hold:

$$ \inf\{x_{k+1} - x_k\} \leq \eta - \xi \leq \sup\{x_{k+1} - x_k\}. $$

After obtaining several significant results regarding the distribution of the zeros of the entire functions associated with $\mathcal{LP}1$, and applying these to the Riemann $\xi$ function, the authors of \cite{DWF.RR} state Conjecture 5.1.1 concerning the inequality  between $m(P)$ and $m(\tilde{P})$. This conjecture can be formulated as follows:

\begin{conj}[Conjecture 5.1.1 of \cite{DWF.RR}]\label{conjecture} 
 Suppose that $P \in \mathcal{LP}1$ and that its zeros $x_k$ are listed in increasing order. If $\xi <\eta$
are consecutive zeros of $P'$,  then
\begin{equation}\label{conj}
{\rm inf} \{ (x_{k+2}-x_k)/2\}<\eta-\xi<{ \rm sup}\{(x_{k+2} -x_k)/2\}.
\end{equation}
\end{conj}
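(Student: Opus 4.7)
The plan is to \emph{disprove} Conjecture \ref{conjecture} by exhibiting one concrete hyperbolic polynomial in the class $\mathcal{LP}1$ for which the upper bound in \eqref{conj} fails. A preliminary observation is that $(x_{k+2}-x_k)/2 = z_{k+1}-z_k$, so the two bounds in \eqref{conj} are nothing but $m(\tilde{P})$ and $M(\tilde{P})$; the conjecture therefore asserts $m(\tilde{P}) < \eta-\xi < M(\tilde{P})$ for every pair of consecutive critical points, and a single violation of either inequality is enough to refute it.

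The numerical procedure would be a minor variant of Algorithm \ref{algo_ordermoduli}: sample $n$ positive reals uniformly in $[0,\ell]$ (so that the resulting polynomial automatically lies in $\mathcal{LP}1$), sort them into roots $x_1<\cdots<x_n$, build $P(x)=\prod_k(x-x_k)$, factor $P'$ to recover the critical points $\xi_1<\cdots<\xi_{n-1}$, form the midpoints $z_k=(x_k+x_{k+1})/2$, and test whether some critical gap $\xi_{k+1}-\xi_k$ exceeds $M(\tilde{P})=\max_k(z_{k+1}-z_k)$ or falls below $m(\tilde{P})=\min_k(z_{k+1}-z_k)$. Only polynomials exhibiting such a violation are retained as candidates.

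The search would be guided by a simple heuristic: cluster most roots tightly and push one single root far to the right. The critical point closest to that isolated root is then dragged to within distance $O(1/n)$ of it, so the adjacent critical-point gap almost equals the isolation distance; meanwhile the extremal midpoint gap on the right equals only \emph{half} of that isolation distance. A minimal degree-$4$ candidate of this shape is the polynomial with roots $1,\,2,\,3,\,11$: its midpoint gaps are $1$ and $(11-2)/2=4.5$, whereas numerical factorization of $P'(x)=4x^{3}-51x^{2}+154x-127$ gives critical-point gaps approximately equal to $1.15$ and $6.21$. Since $6.21>4.5$, the upper inequality in \eqref{conj} is violated.

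The main obstacle is not finding such an example---the heuristic above makes this immediate---but upgrading the numerical observation into a rigorous disproof. This step would be handled by exact arithmetic: fix rational roots, form $P$ and $P'$ symbolically, and certify the location of the extreme critical-point gap via Sturm's theorem, Budan--Fourier, or a certified interval enclosure around each $\xi_k$. Once the strict inequality $\eta-\xi>M(\tilde{P})$ is proved for one concrete polynomial, Conjecture \ref{conjecture} is refuted.
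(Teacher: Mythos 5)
Your proposal misidentifies what is actually at stake here. The truth status of Conjecture~\ref{conjecture} as a blanket statement is not open: as the paper notes, it was already settled by Dimitrov and Kostov in \cite{DKDVPK} (Theorems 1 and 2), where the four cases $L_{\pm}R_{\pm}$ are classified degree by degree. Your degree-$4$ example with roots $1,2,3,11$ is (as far as I can check) numerically sound --- the critical points are approximately $1.411$, $2.564$, $8.775$, so the gap $\xi_3-\xi_2\approx 6.21$ indeed exceeds $M(\tilde P)=(11-2)/2=4.5$ while both critical gaps exceed $m(\tilde P)=1$ --- but that is an instance of the case $L_{+}R_{-}$, which lies squarely inside the territory already resolved in \cite{DKDVPK}. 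So your ``disproof'' reproves a known fact and does not touch the question the paper is actually answering.

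The genuinely open problem, and the paper's contribution, is the opposite case $L_{-}R_{+}$ in degrees $5$ and $6$: one must produce a hyperbolic polynomial with $m(P') < m(\tilde P)$ (left inequality of \eqref{conj} fails) \emph{and} $M(P') < M(\tilde P)$ (right inequality holds). Your guiding heuristic --- cluster all roots except one and push that one far away --- is engineered to inflate the largest critical-point gap past $M(\tilde P)$, which is precisely the wrong direction: it forces $R_{-}$ rather than $R_{+}$, and does nothing to drive the \emph{smallest} critical gap below $m(\tilde P)$. The paper's degree-$6$ example (roots $-0.19,-0.18,0.13,0.21,0.67,0.96$) achieves $m(P')\approx 0.1555 < m(\tilde P)=0.16$ while $M(P')\approx 0.3451 < M(\tilde P)=0.375$, a far more delicate balance; the reported failure of $10^{8}$ random trials in degree $5$ indicates how narrow that configuration is. Your closing remark about certifying a counterexample by exact arithmetic is sensible in principle, but it would certify the wrong statement.
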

\vspace{3mm}

Observe that 
$$\begin{tabular}{lll}
$(x_{k+2 }-x_k)/2=z_{k+1}- z_k$, && ${\rm inf} \{ (x_{k+2}-x_k)/2\}=m(\tilde{P})$\\ {\rm }and && ${\rm sup} \{ (x_{k+2}-x_k)/2\}=M(\tilde{P})$.\\ 
\end{tabular}$$

\vspace{0.5cm}
We denote by $L_{-}R_{+}$ the case when the left
inequality in (\ref{conj}) fails while the right one holds; in a similar way we define the cases $L _{-} R_{-}$,
$L_{+}R_{-}$ and $L_{+}R_{+}$.
\vspace{3mm}

The answer to Conjecture \ref{conjecture} was provided by D.K. Dimitrov and V.P. Kostov in Theorems 1 and 2 of \cite{DKDVPK}. However, there remain two inequalities in point 5 of Theorem 2 for which the solution is still unknown. In other words, they have not managed to provide an analytic answer to the conjecture \ref{conjecture} regarding degrees 5 and 6 concerning the inequalities $L_{-}R_{+}$. The resolution of the case $L_{-}R_{+}$ for degree 6 using the same numerical approach leads to the following numerical example:

$$ \begin{array}{lll}

P&=&x^6-1.60x^5+0.5300x^4+0.122578x^3-0.03793509x^2-0.0025040322x\\
&&+0.000600530112
\end{array}$$
Its roots are equal to $x_1=-0.19$, $x_2=-0.18$, $x_3=0.13$, $x_4=0.21$, $x_5=0.67$, $x_6=0.96$. That gives
 $$\begin{array}{ll}
 (x_3-x_1)/2=0.16, &(x_4-x_2)/2=0.195,\\
 (x_5-x_3)/2=0.27& (x_6-x_4)/2=0.375,\\
 \end{array}$$
 and
$$\begin{array}{ll}
 m(\tilde{P})={\rm inf} \{ (x_{k+2}-x_k)/2\}=0.16,&k=1,\dots, 4\\
  M(\tilde{P})={\rm sup}\{ (x_{k+2}-x_k)/2\}=0.375,&k=1,\dots, 4.
 \end{array}$$
 The  derivative  of $P$ is
 $$\begin{array}{lll}
 P'&:=&6x^5-8x^4+2.12x^3+0.367734x^2-0.07587018x-0.0025040322,\\
  \end{array}$$
its  roots are $\xi_1=-0.1850968062$, $\xi_2=-0.02957083052$, $\xi_3=0.1718593928$, $\xi_4= 0.5155057599$ and $\xi_5=0.8606358173$. The difference between the consecutive roots of $P'$ are
$$\begin{array}{ll}
\xi_2-\xi_1=0.1555259757,&\xi_3-\xi_2=0.2014302233,\\
\xi_4-\xi_3=0.3436463671,&\xi_5-\xi_4=0.3451300574.
\end{array}$$
We have $m(P')=(\xi_2-\xi_1)=0.1555259757,~{\rm and}~M(P')=(\xi_5-\xi_4)=0.3451300574$ hence
$$ \begin{array}{lr}
 m(P') < m(\tilde{P})<M(P') <M(\tilde{P}).
\end{array}   $$
Therefore, the polynomial $P$  realizes the case $L_{-}R_{+}$.
The CPU time here is equal to 23.99 seconds for $N=10^5$ and $l=1$.\\
This confirms the robustness of our numerical method and proves  that this approach can be used to solve various mathematical problems.

Regarding the case $ L_{-}R_{+} $  mentioned in point (5) of Theorem 2 in \cite{DKDVPK}, the code produced no results even for draws of numbers larger than $ 10^{8} $. This reinforces the hypothesis that there is no polynomial of degree  $5 $  satisfying the two inequalities of the conjecture \ref{conjecture}.
\section{Conclusion}
In conclusion, the proposed numerical approach based on the generation of independent and uniformly  distributed random roots  to construct random polynomial offer an effective and efficient method to solve accurate polynomial analysis problem. When no result is given by the algorithm,  one can identify a priori non-realizable cases with a high degree of certainty.  This method is a powerful tool for theoretical investigations. Its simplicity, ease of implementation and computational speed make it an invaluable resource for researchers working in this field.

{\bf Acknowledgement} 
Authors would like to  thank Pr. Maher BERZIG for his helpful comments.

\end{document}